\newtheorem{thm}{Theorem}
\newtheorem{lemma}[thm]{Lemma}
\theoremstyle{remark}
\newtheorem*{remark}{Remark}
\newcommand{\Z}{{\mathbb Z}}
\newcommand{\F}{{\mathbb F}}
\newcommand{\abs}[1]{\lvert#1\rvert}
\renewcommand{\bar}[1]{#1\llap{$\overline{\phantom{\rm#1}}$}}
\newcommand\as[1]{\renewcommand\arraystretch{#1}}
\begin{document}



\title{$p^k$-torsion of genus two curves over $\F_{p^m}$}

\author{Michael E. Zieve}
\address{
Michael E. Zieve\\
Department of Mathematics\\
Hill Center--Busch Campus\\
Rutgers, The State University of New Jersey\\
110 Frelinghuysen Road\\
Piscataway, NJ 08854--8019\\
USA
}
\email{zieve@math.rutgers.edu}
\urladdr{www.math.rutgers.edu/$\sim$zieve}

\date{August 29, 2008}

\subjclass{14H40}

\begin{abstract}
We determine the isogeny classes of abelian surfaces over $\F_q$ whose
group of $\F_q$-rational points has order divisible by $q^2$.
We also solve the same problem for Jacobians of genus-$2$ curves.
\end{abstract}

\maketitle


In a recent paper \cite{a}, Ravnsh{\o}j proved: if $C$ is a genus-$2$
curve over a prime field $\F_p$, and if one assumes that the endomorphism
ring of the Jacobian $J$ of $C$ is the ring of integers in a primitive
quartic CM-field, and that the Frobenius endomorphism
of $J$ has a certain special form, then $p^2\nmid\#J(\F_p)$.
Our purpose here is to deduce this conclusion under less
restrictive hypotheses.  We write $q=p^m$ where $p$ is prime,
and for any abelian variety $J$ over $\F_q$ we let $P_J$ denote
the \emph{Weil polynomial} of $J$, namely the characteristic
polynomial of the Frobenius endomorphism $\pi_J$ of $J$.
As shown by Tate \cite[Thm.~1]{tate}, two abelian varieties over $\F_q$ are
isogenous if and only if their Weil polynomials are identical.  Thus, the
following result describes the isogeny classes of abelian surfaces $J$ over
$\F_q$ for which $q^2\mid\#J(\F_q)$.

\begin{thm} \label{av}
The Weil polynomials of abelian surfaces $J$ over\/ $\F_q$ satisfying
$q^2\mid\#J(\F_q)$ are as follows\emph{:}
\begin{enumerate}
\item[(1.1)] $X^4 + X^3 - (q+2)X^2 + qX + q^2$ (if $q$ is odd and $q>8$);
\item[(1.2)] $X^4 - X^2 + q^2$;
\item[(1.3)] $X^4 - X^3 + qX^2 - qX + q^2$ (if $m$ is odd or $p\not\equiv 1\bmod{4}$);
\item[(1.4)] $X^4 - 2X^3 + (2q+1)X^2 - 2qX + q^2$;
\item[(1.5)] $X^4 + aX^3 + bX^2 + aqX + q^2$, where $(a,b)$ occurs in the same
row as $q$ in the following table:
\begin{center}
\as{1.1}
\begin{tabular}{|c|l|}
\hline
$q$ & $(a,b)$                       \\
\hline\hline
$13$ & $(9,42)$ \\ \hline
$9$ & $(6,20)$ \\ \hline
$7$ & $(4,16)$ \\ \hline
$5$ & $(3,6)$ \emph{or} $(8,26)$ \\ \hline
$4$ & $(2,5)$, $(4,11)$, \emph{or} $(6,17)$ \\ \hline
$3$ & $(1,4)$, $(3,5)$, \emph{or} $(4,10)$ \\ \hline
$2$ & $(0,3)$, $(1,0)$, $(1,4)$, $(2,5)$, \emph{or} $(3,6)$ \\
\hline
\end{tabular}
\end{center}
\end{enumerate}
\end{thm}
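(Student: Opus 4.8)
The plan is to reduce the problem to an analysis of the Weil polynomial. Write $P_J(X) = X^4 + aX^3 + bX^2 + aqX + q^2$ with $a,b\in\Z$; this shape is forced by the functional equation for an abelian surface, and $\#J(\F_q) = P_J(1) = 1 + a + b + aq + q^2$. Since every root of $P_J$ has absolute value $\sqrt q$ and the roots occur in complex-conjugate pairs, each pair having product $q$, one may factor $P_J(X) = (X^2 - x_1 X + q)(X^2 - x_2 X + q)$ over $\mathbb R$ with $x_1,x_2\in[-2\sqrt q,2\sqrt q]$ and $x_1 + x_2 = -a$. Consequently
\[
  \#J(\F_q) = (q + 1 - x_1)(q + 1 - x_2)
\]
is a product of two real numbers each lying in $[(\sqrt q-1)^2,(\sqrt q+1)^2]$, and in particular $q^2 \le \#J(\F_q) \le (\sqrt q+1)^4$. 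I would then split according to whether $\#J(\F_q) = q^2$ or $\#J(\F_q) \ge 2q^2$; the latter already forces $2q^2 \le (\sqrt q+1)^4$ and hence confines $q$ to an explicit finite set of prime powers ($q\le 27$).

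Suppose first $\#J(\F_q) = q^2$. The two factors $q+1-x_i$ then multiply to $q^2$, so their sum is at least $2q$; since moreover each is at most $(\sqrt q+1)^2$, that sum is strictly less than $2q+4$. As it equals the integer $2q+2+a$, we get $a\in\{-2,-1,0,1\}$, and for each such $a$ the equation $P_J(1)=q^2$ determines $b$, giving exactly the four polynomials in (1.1)--(1.4) (for instance $a=0$ yields (1.2), and $a=-2$ yields $(X^2 - X + q)^2$, which is (1.4)). It remains to decide for which $q$ each of these is genuinely the Weil polynomial of an abelian surface, which I would settle via Honda--Tate theory and Waterhouse's classification of Frobenius traces of elliptic curves: when the polynomial is reducible, $J$ is isogenous to a product of elliptic curves and one checks that each quadratic factor is realized; when it is irreducible, $J$ is simple and one must verify that the local invariants at $p$ make the dimension exactly $2$, equivalently that the Newton polygon is one of the three admissible in dimension $2$. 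This is precisely where the side conditions enter: the inequalities $2\abs a\sqrt q - 2q \le b \le a^2/4 + 2q$ needed for $x_1,x_2$ to be real of the correct size already exclude (1.1) for $q\le 7$; the Newton polygon of (1.1) has a slope outside $\{0,\tfrac12,1\}$ when $q = 2^m$ with $m\ge 3$, killing every even $q>8$; and a supersingular obstruction removes (1.3) exactly when $m$ is even and $p\equiv 1\pmod 4$, whereas (1.2) and (1.4) persist for all $q$.

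For the finitely many remaining $q$ (those with $q\le 27$), I would run a finite search: enumerate $(a,b)\in\Z^2$ with $\abs a\le 4\sqrt q$ and $2\abs a\sqrt q - 2q \le b \le a^2/4 + 2q$ satisfying $q^2\mid 1 + a + b + aq$, that is, $b\equiv -1 - a(1+q)\pmod{q^2}$; discard those not realized by an abelian surface by the Honda--Tate and Waterhouse criteria above; and collect the survivors with $\#J(\F_q)>q^2$, obtaining the table in (1.5).

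The main obstacle is the realizability analysis behind the last two steps. The congruence $q^2\mid P_J(1)$ cuts the candidate list down to a short list of polynomials very quickly, but deciding which of those truly come from abelian surfaces requires a careful separation of the reducible and irreducible cases and a computation of the local Hasse invariants --- equivalently, of the Newton polygon and of the structure of any elliptic factors --- with the small-$q$, supersingular, and characteristic-$2$ situations each needing individual treatment.
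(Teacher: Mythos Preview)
Your plan is essentially the paper's: use the Weil bound $\#J(\F_q)\le(\sqrt q+1)^4$ to split into the generic case $P_J(1)=q^2$ (where $b=-1-a(q+1)$ and one pins down $a\in\{-2,-1,0,1\}$, then checks realizability) and a finite search for $q\le 27$.  The paper packages the realizability step by quoting R\"uck's explicit list of admissible $(a,b)$ (its Lemma~2) rather than re-running Honda--Tate by hand, and it bounds $a$ by manipulating the inequalities $2\abs a\sqrt q-2q\le b\le a^2/4+2q$ directly rather than via your AM--GM argument on the factors $q+1-x_i$; these are cosmetic differences.

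One point to tighten.  Your assertion that for an irreducible quartic Weil polynomial the dimension is $2$ ``equivalently [when] the Newton polygon is one of the three admissible'' is not correct: admissibility of the Newton polygon is necessary but not sufficient.  In the $p$-rank~$1$ shape $\{0,\tfrac12,\tfrac12,1\}$ one must also ensure that the slope-$\tfrac12$ segment corresponds to a single place of degree~$2$ above $p$ rather than two places of degree~$1$; this is exactly the condition that $\delta=(b+2q)^2-4qa^2$ be zero or a non-square in $\Z_p$ (condition~(2.2) in the paper's Lemma~2).  For (1.3) one has $\delta=9q^2-4q$, and when $m$ is even and $p\equiv1\pmod4$ this \emph{is} a square in $\Z_p$, which is what kills that case---the Newton polygon itself is still $\{0,\tfrac12,\tfrac12,1\}$.  (So the obstruction is a $p$-rank~$1$ phenomenon, not a supersingular one.)  You land on the right exclusion, so you clearly have the mechanism; just state it accurately rather than as a Newton-polygon equivalence.
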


The special form required of the Frobenius endomorphism in \cite{a}
has an immediate consequence for the shape of its characteristic polynomial,
and by inspection the above polynomials do not have the required shape.
Thus the main result of \cite{a} follows from the above result.

Our proof of Theorem~\ref{av} relies on the classical results of
Tate (\cite[Thm.~1]{tate} and \cite[Thm.~8]{tate2}) and
Honda \cite{honda} describing the Weil polynomials of abelian varieties over
finite fields.  An explicit version of their results in the case of
simple abelian surfaces was given by R\"uck~\cite[Thm.~1.1]{rueck};
together with the analogous results of Waterhouse~\cite[Thm.~4.1]{waterhouse}
for elliptic curves, this yields the following:

\begin{lemma} \label{honda}
The Weil polynomials of abelian surfaces over $\F_q$ are
precisely the polynomials $X^4+aX^3+bX^2+aqX+q^2$, where $a,b\in\Z$ satisfy
$\abs{a}\le 4\sqrt{q}$ and $2\abs{a}\sqrt{q}-2q\le b \le \frac{a^2}{4}+2q$,
and where $a$, $b$, and the values $\Delta:=a^2-4(b-2q)$ and
$\delta:=(b+2q)^2-4qa^2$ satisfy one of the conditions \emph{(2.1)--(2.4)} below:
\begin{enumerate}
\item[(2.1)] $v_p(b)=0$;
\item[(2.2)] $v_p(b)\ge m/2$ and $v_p(a)=0$, and either $\delta=0$ or $\delta$ is a
non-square in the ring $\Z_p$ of $p$-adic integers;
\item[(2.3)] $v_p(b)\ge m$ and $v_p(a)\ge m/2$ and $\Delta$ is a square in $\Z$, and
if $q$ is a square and we write $a=\sqrt{q}a'$ and $b=qb'$ then
\begin{align*}
p\not\equiv 1\bmod{4}&\quad\text{if $b'=2$} \\
p\not\equiv 1\bmod{3}&\quad\text{if $a'\not\equiv b'\bmod{2}$;}
\end{align*}
\item[(2.4)] the conditions in one of the rows of the
following table are satisfied:
\begin{center}
\as{1.2}
\begin{tabular}{|c|l|}
\hline
$(a,b)$              & \emph{Conditions on $p$ and $q$}                       \\
\hline\hline
$(0,0)$              & \emph{$q$ is a square and $p\not\equiv1\bmod8$, or}    \\
                     & \emph{$q$ is a non-square and $p \ne 2$}               \\
\hline
$(0,-q)$             & \emph{$q$ is a square and $p\not\equiv1\bmod{12}$, or} \\
                     & \emph{$q$ is a non-square and $p \ne 3$}               \\
\hline
$(0,q)$              & \emph{$q$ is a non-square}                             \\
\hline
$(0,-2q)$            & \emph{$q$ is a non-square}                             \\
\hline
$(0,2q)$             & \emph{$q$ is a square and $p \equiv 1 \bmod4$}         \\
\hline
$(\pm \sqrt{q},q)$   & \emph{$q$ is a square and $ p \not \equiv 1 \bmod5$}   \\
\hline
$(\pm \sqrt{2q},q)$  & \emph{$q$ is a non-square and $p=2$}                   \\
\hline
$(\pm 2\sqrt{q},3q)$ & \emph{$q$ is a square and $p \equiv 1 \bmod3$}         \\
\hline
$(\pm \sqrt{5q},3q)$ & \emph{$q$ is a non-square and $p=5$}                   \\
\hline
\end{tabular}
\vskip 1em
\end{center}
\end{enumerate}
Moreover, the surface $J$ is simple if and only if either
\begin{itemize}
\item $\Delta$ is a non-square in $\Z$; or
\item $(a,b)=(0,2q)$ and $q$ is a square and $p\equiv 1\bmod{4}$; or
\item $(a,b)=(\pm 2\sqrt{q},3q)$ and $q$ is a square and $p\equiv 1\bmod{3}$.
\end{itemize}
The $p$-rank of $J$ (namely, the rank of the $p$-torsion subgroup of
$J(\bar{\F}_q))$ is $2$ in \emph{(2.1)}, $1$ in \emph{(2.2)}, and $0$ in
\emph{(2.3)} and \emph{(2.4)}.
\end{lemma}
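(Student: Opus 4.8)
The plan is to splice together R\"uck's explicit form of Honda--Tate for \emph{simple} abelian surfaces over $\F_q$ with Waterhouse's classification of elliptic Weil polynomials, the latter governing the non-simple surfaces since any such surface is isogenous to a product $E_1\times E_2$ of elliptic curves. First I would record the formal reductions behind the statement. For $P(X)=X^4+aX^3+bX^2+aqX+q^2$ one has $X^4P(q/X)=q^2P(X)$, so the roots of $P$ pair up as $\{\alpha,q/\alpha\}$ and $P(X)=(X^2-c_1X+q)(X^2-c_2X+q)$, where $c_1,c_2$ are the roots of $T^2+aT+(b-2q)$; thus $\Delta=a^2-4(b-2q)$ is the discriminant of this quadratic, and a short expansion gives $\delta=(b+2q)^2-4qa^2=(c_1^2-4q)(c_2^2-4q)$. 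All four roots of $P$ lie on $\lvert z\rvert=\sqrt q$ exactly when $c_1,c_2$ are real and lie in $[-2\sqrt q,2\sqrt q]$; since $T^2+aT+(b-2q)$ opens upward with vertex $-a/2$, this amounts to $\Delta\ge 0$, $\lvert a\rvert\le 4\sqrt q$, and nonnegativity at $\pm 2\sqrt q$, which together rearrange to $\lvert a\rvert\le 4\sqrt q$ and $2\lvert a\rvert\sqrt q-2q\le b\le \frac{a^2}{4}+2q$ --- the stated range. By Honda--Tate together with Poincar\'e reducibility, a $P$ of this form is the Weil polynomial of an abelian surface over $\F_q$ in exactly one of three situations: (i) $P=P_{E_1}P_{E_2}$ with each $P_{E_i}=X^2-c_iX+q$ an elliptic Weil polynomial over $\F_q$; (ii) $P$ is irreducible over ${\mathbb Q}$, the minimal polynomial of a primitive quartic Weil $q$-number; or (iii) $P=(X^2-cX+q)^2$ with $X^2-cX+q$ irreducible over ${\mathbb Q}$ and the associated simple abelian variety $2$-dimensional (equivalently, with no elliptic curve over $\F_q$ of trace $c$).

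Next I would treat the three cases and reassemble. In (i), $\Delta$ is a square in $\Z$, so $c_1,c_2=\tfrac12(-a\pm\sqrt\Delta)\in\Z$, and Waterhouse's Theorem~4.1 says exactly which integers $c_i$ with $\lvert c_i\rvert\le 2\sqrt q$ occur as elliptic traces over $\F_q$: those with $\gcd(c_i,p)=1$, together with the integers among $\{0,\pm\sqrt q,\pm2\sqrt q,\pm\sqrt{pq}\}$, each subject to a congruence on $p$ and a parity condition on $m$. Substituting $c_1+c_2=-a$ and $c_1c_2=b-2q$: two ordinary factors force $v_p(b)=0$, which is (2.1) with $p$-rank $2$; one ordinary and one supersingular factor force $v_p(a)=0$ and $v_p(b)\ge m/2$, and --- via Waterhouse's condition on the supersingular trace, transcribed through the identity for $\delta$ --- the $\delta$-alternative of (2.2), with $p$-rank $1$; two supersingular factors force $v_p(a)\ge m/2$ and $v_p(b)\ge m$, landing in (2.3)--(2.4) with $p$-rank $0$. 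For the simple cases (ii) and (iii) I would quote R\"uck's Theorem~1.1 and match its list to the pairs $(a,b)$ in the allowed range: those with $\Delta$ a non-square in $\Z$ (never products, hence simple whenever realizable) sorted into (2.1)--(2.4) by $v_p(a)$ and $v_p(b)$, together with the two sporadic $\Delta=0$ families $(0,2q)$ for $q$ a square with $p\equiv1\bmod4$, and $(\pm2\sqrt q,3q)$ for $q$ a square with $p\equiv1\bmod3$, where Waterhouse forbids an elliptic curve with the required trace. The ``moreover'' assertions then fall out: a \emph{positive} square value of $\Delta$ gives a genuine factorization into two distinct rational quadratics $X^2-c_iX+q$ and hence a non-simple surface; $\Delta=0$ gives either $E\times E$ or one of the two sporadic simple families; and the $p$-rank equalities are read directly off the Newton polygon of $P$, which is governed by $v_p(a)$ and $v_p(b)$.

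The real work --- and the step most likely to resist a clean treatment --- is the bookkeeping in the second paragraph: faithfully converting the hypotheses of Waterhouse's and R\"uck's theorems, which are phrased in terms of elliptic traces or of $\pi$ and its minimal polynomial and which involve the residue of $p$ modulo $3,4,5,8,12$ and the parity of $m$, into the single uniform list (2.1)--(2.4) in the variables $(a,b)$, and verifying that the resulting case division is exhaustive and matches the stated shape. The delicate points are: checking that the table (2.4) captures exactly the supersingular and exceptional cases left uncovered by (2.1)--(2.3); handling the degenerate boundary $\Delta=0$, where the two quadratic factors coincide; and disentangling the interplay whereby a pair such as $(0,2q)$, $(0,-2q)$, or $(0,q)$ is realized by a product of elliptic curves for certain $q$ and residues of $p$ (so falls under (2.1)--(2.3)) and by a simple surface, or not at all, for others (so falls under (2.4)).
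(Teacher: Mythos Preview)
The paper does not prove this lemma: it is stated without proof, citing R\"uck's Theorem~1.1 for the simple case and Waterhouse's Theorem~4.1 for elliptic curves (hence the non-simple case) as its sources. Your proposal is exactly an elaboration of that indicated derivation---the factorization $P=(X^2-c_1X+q)(X^2-c_2X+q)$, the identities $\Delta=(c_1-c_2)^2$ and $\delta=(c_1^2-4q)(c_2^2-4q)$, and the case split into ordinary/mixed/supersingular and simple/non-simple are all correct---so your approach coincides with the paper's, only made explicit.
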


\vspace{.1in}
\begin{proof}[Proof of Theorem~\ref{av}]
As shown by Weil \cite{weil}, for any abelian surface $J$ over $\F_q$, the Weil
polynomial $P_J$ is a monic quartic in
$\Z[X]$ whose complex roots have absolute value $\sqrt{q}$.
In particular, $\#J(\F_q)=\deg(\pi_J-1)=P_J(1)\le (\sqrt{q}+1)^4$, so if
$\#J(\F_q)=cq^2$ with $c\in\Z$ then $c\le (1+q^{-1/2})^4$.  It follows that
$c=1$ unless $q\le 27$.  In light of the above lemma, there are just finitely
many cases to consider with $c>1$; we treated these cases using the computer
program presented at the end of this paper, which gave rise to precisely the
solutions in (1.5).  Henceforth assume $c=1$.

The Weil polynomials of abelian surfaces over $\F_q$ are the polynomials
$P(X):=X^4 + a X^3 + b X^2 + a q X + q^2$ occurring in the above lemma.
We must determine which of these polynomials satisfy $P(1)=q^2$, or
equivalently, $b=-1-a(q+1)$.  The inequality $-1-a(q+1)=b\le a^2/4+2q$
says that $q^2\le (a/2+q+1)^2$, and since $a/2+q+1\ge -2\sqrt{q}+q+1>0$,
this is equivalent to $q\le a/2+q+1$, or in other words $-2\le a$.
The inequality $2\abs{a}\sqrt{q}-2q\le b=-1-a(q+1)$ always holds if $a\in\{0,-1,-2\}$,
and if $a\ge 1$ it is equivalent to $a(\sqrt{q}+1)^2\le 2q-1$; since
$2q-1<2q<2(\sqrt{q}+1)^2$, this implies $a=1$, in which case
$(\sqrt{q}+1)^2\le 2q-1$ is equivalent to $q\ge 8$.

Condition (2.1) holds if and only if $a\not\equiv -1\bmod p$, or equivalently
either $a\in\{0,-2\}$ or both $a=1$ and $p\ne 2$.  This accounts for (1.1), (1.2),
and (1.4).

Condition (2.3) cannot hold, since $p\mid a$ implies $b\equiv -1\bmod{p}$.

The condition $v_p(b)\ge m/2$ says that
$a\equiv -1\bmod{p^{\lceil m/2\rceil}}$, or equivalently $a=-1$.
In this case, $b=q$ and $\delta=9q^2-4q$, so $\delta\ne 0$.
If $q$ is odd then $\delta$ is a square in $\Z_p$ if and only if
$\delta$ is a square modulo $pq$, or equivalently, $m$ is even and $-4$ is
a square modulo $p$, which means that $p\equiv 1\bmod{4}$.
If $q$ is even then $\delta$ is not a square in $\Z_2$, since for
$q\le 8$ we have $\delta\in\{28, 128, 544\}$, and for $q>8$ we have
$\delta\equiv -4q\bmod{16q}$.  Thus (2.2) gives rise to (1.3).

Finally, if $a=-2$ then $b=2q+1$, and if $a=0$ then $b=-1$, so in either
case $q\nmid b$.  Thus (2.4) cannot hold, and the proof is complete.
\end{proof}


Next we determine which of the Weil polynomials in (1.1)--(1.5) occur for
Jacobians.  We use the classification of Weil polynomials of Jacobians of
genus-$2$ curves.  This classification was achieved by the
combined efforts of many mathematicians, culminating in the following result
\cite[Thm.~1.2]{b}:

\begin{lemma} \label{nonjac}
Let $P_J=X^4+aX^3+bX^2+aqX+q^2$ be the Weil polynomial of an abelian surface
$J$ over\/ $\F_q$.
\begin{enumerate}
\item  If $J$ is simple then $J$ is not isogenous to a Jacobian if
and only if the conditions in one of the rows of the following table are met:

\begin{center}
\as{1.2}
\begin{tabular}{|l|l|}
\hline
\emph{Condition on $p$ and $q$} &
 \emph{Conditions on $a$ and $b$}\\
\hline\hline
---                                 & \emph{$a^2-b = q$ and $b<0$ and} \\
                                     & \emph{all prime divisors of $b$ are $1\bmod3$}\\ \hline
 ---                                 & \emph{$a = 0$ and $b = 1-2q$} \\ \cline{1-2}
 $p>2$                               & \emph{$a = 0$ and $b = 2-2q$} \\ \hline
 \emph{$p\equiv 11\bmod 12$ and $q$ square} & \emph{$a = 0$ and $b = -q$}   \\ \cline{1-2}
 \emph{$p= 3$ and $q$ square}               & \emph{$a = 0$ and $b = -q$}   \\ \cline{1-2}
 \emph{$p= 2$ and $q$ non-square}            & \emph{$a = 0$ and $b = -q$}   \\ \cline{1-2}
 \emph{$q=2$ or $q=3$}                      & \emph{$a = 0$ and $b = -2q$}  \\ \hline
\end{tabular}
\end{center}
\vspace{1ex}

\vspace{.1cm}
\item If $J$ is not simple then there are integers $s,t$ such that 
$P_J=(X^2-sX+q)(X^2-tX+q)$, and $s$ and $t$ are unique if we require that
$\abs{s}\ge\abs{t}$ and that if $s=-t$ then $s\ge 0$.
For such $s$ and $t$, $J$ is not isogenous to a Jacobian if and only if the conditions in one of
the rows of the following table are met:
\end{enumerate}
\vspace{.2cm}

\begin{center}
\as{1.2}
\begin{tabular}{|c|l|l|}
\hline
\emph{$p$-rank of $J$} & \emph{Condition on $p$ and $q$} & \emph{Conditions on $s$ and $t$}\\
\hline\hline
 --- & ---                     & \emph{$|s-t| = 1$}                           \\ \hline
 $2$ & ---                     & \emph{$s=t$ and $t^2 - 4q\in \{-3,-4,-7\}$}  \\ \cline{2-3}
     & $q=2$                   & \emph{$s=1$ and $t=-1$}             \\ \hline
 $1$ & \emph{$q$ square}              & \emph{$s^2 = 4q$ and $s-t$ squarefree}       \\ \hline
     & \emph{$p>3$}                   & \emph{$s^2 \neq t^2$}                        \\ \cline{2-3}
     & \emph{$p=3$ and $q$ non-square} & \emph{$s^2 = t^2 = 3q$}                      \\ \cline{2-3}
 $0$ & \emph{$p=3$ and $q$ square}    & \emph{$s-t$ is not divisible by $3\sqrt{q}$} \\ \cline{2-3}
     & \emph{$p=2$}                   & \emph{$s^2 - t^2$ is not divisible by $2q$}  \\ \cline{2-3}
     & \emph{$q=2$ or $q=3$}          & \emph{$s = t$}                               \\ \cline{2-3}
     & \emph{$q=4$ or $q=9$}          & \emph{$s^2 = t^2 = 4q$}                      \\ \hline
\end{tabular}
\end{center}
\end{lemma}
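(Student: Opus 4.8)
The plan is to read the statement off \cite[Thm.~1.2]{b}---the classification of Weil polynomials of genus-$2$ Jacobians assembled over the years by many authors---together with the $p$-rank information already recorded in Lemma~\ref{honda}. The second part, the non-simple case, requires essentially no new argument. By Poincar\'e reducibility a non-simple abelian surface over $\F_q$ possesses a one-dimensional abelian subvariety, and hence is isogenous over $\F_q$ to a product $E_1\times E_2$ of elliptic curves; its Weil polynomial is therefore the product of the Weil polynomials of $E_1$ and $E_2$, so $P_J=(X^2-sX+q)(X^2-tX+q)$ for some $s,t\in\Z$, and imposing $\abs{s}\ge\abs{t}$ and $s\ge0$ when $s=-t$ makes $s$ and $t$ unique. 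The list of non-Jacobian cases in the second table is then exactly the corresponding portion of \cite[Thm.~1.2]{b}, which is already stated in terms of $s$, $t$, $p$, $q$, and the $p$-rank, so it can be transcribed verbatim.

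The substantive point is in the first part, the simple case. There the table of \cite[Thm.~1.2]{b} carries an additional column listing the $p$-rank of $J$, and what has to be checked is that this column is redundant: in each row the listed $p$-rank is forced by the conditions on $a$, $b$, $p$, $q$ through Lemma~\ref{honda}, so the column may be deleted to obtain the two-column table above. I would go through the rows one by one. In the row where $a^2-b=q$, $b<0$, and every prime divisor of $b$ is $\equiv 1\bmod 3$, there is no $p$-rank constraint in \cite{b}, matching the ``---'' above. In the two rows with $a=0$, $b=1-2q$ and with $a=0$, $b=2-2q$, $p>2$, the listed $p$-rank is $2$; and since $q=p^m$ we have $b\equiv 1$ or $2\bmod p$, so $p\nmid b$---this is where the hypothesis $p>2$ is used in the second case---hence $v_p(b)=0$, condition~(2.1) of Lemma~\ref{honda} holds, and the $p$-rank is indeed $2$. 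In each of the remaining rows the listed $p$-rank is $0$; and there $a=0$ and $q\mid b$, so $v_p(b)\ge m\ge 1$ while $v_p(a)=\infty$, whence (2.1) fails (it requires $v_p(b)=0$) and (2.2) fails (it requires $v_p(a)=0$), forcing the $p$-rank to be $0$. These rows exhaust the table.

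I expect the main obstacle to be organizational rather than mathematical: one must match each row of the table above with the corresponding entry of \cite[Thm.~1.2]{b} and confirm that suppressing the $p$-rank column loses nothing. The two points that need a moment's care are precisely the ones above---that $p\nmid b$ really does hold in the $b=2-2q$ row, which explains why $p>2$ is required there but not in the $b=1-2q$ row, and that $v_p(a)=0$ genuinely fails when $a=0$, so that condition~(2.2), the unique $p$-rank-$1$ regime of Lemma~\ref{honda}, cannot be attained with $a=0$---and both follow at once from the definitions of $v_p$ and of conditions (2.1)--(2.4). No analytic input is needed.
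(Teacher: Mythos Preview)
Your proposal is correct and matches the paper's approach: the lemma is quoted from \cite[Thm.~1.2]{b}, and the only thing to justify is that the $p$-rank column in the simple-case table of \cite{b} is redundant, which you do exactly as the author does (in a commented-out passage in the source) by observing that $p\nmid b$ in the rows $b=1-2q$ and $b=2-2q$, $p>2$, giving $p$-rank~$2$ via (2.1), while $a=0$ and $q\mid b$ in the remaining rows rule out (2.1) and (2.2), forcing $p$-rank~$0$.
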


\vspace{.3cm}

\begin{thm}
The polynomials in \emph{(1.1)--(1.5)} which are not Weil polynomials of Jacobians
are precisely the polynomials $X^4+aX^3+bX^2+aqX+q^2$
where $q$ and $(a,b)$ satisfy the conditions in one of the rows of the following table:
\begin{center}
\begin{tabular}{|c|l|}
\hline
$q$ & $(a,b)$                       \\
\hline\hline
$5$ & $(8,26)$ \\ \hline
$4$ & $(6,17)$ \\ \hline
$2$ & \emph{$(-2,5)$, $(0,3)$, $(1,4)$, $(2,5)$, or $(3,6)$} \\ \hline
\end{tabular}
\end{center}
\end{thm}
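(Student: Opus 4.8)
The plan is to cross-reference Theorem~\ref{av}, which lists every Weil polynomial in question, with Lemma~\ref{nonjac}, which decides which isogeny classes contain a genus-$2$ Jacobian. For each polynomial $P=X^4+aX^3+bX^2+aqX+q^2$ appearing in (1.1)--(1.5) I would first decide whether the associated surface $J$ is simple. By the ``Moreover'' clause of Lemma~\ref{honda}, and because none of the pairs $(a,b)$ in (1.1)--(1.5) equal $(0,2q)$ or $(\pm2\sqrt q,3q)$, this is equivalent to asking whether $\Delta:=a^2-4(b-2q)$ is a non-square in $\Z$. If $J$ is simple I would apply the first table of Lemma~\ref{nonjac}; if not, I would write $P=(X^2-sX+q)(X^2-tX+q)$ with $s+t=-a$ and $(s-t)^2=\Delta$, normalized so that $\abs s\ge\abs t$ (and $s\ge0$ when $s=-t$), read off the $p$-rank of $J$ from conditions (2.1)--(2.4) of Lemma~\ref{honda}, and apply the second table.

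For the four infinite families (1.1)--(1.4) the key observation is that $\Delta$ is a non-square for all but a handful of $q$. For (1.1) one has $\Delta=12q+9$, and $\Delta=k^2$ would force $j(j+1)=3q+2$ with $k=2j+1$, which is impossible since $j(j+1)$ is even while $3q+2$ is odd ($q$ being odd in (1.1)); hence $J$ is always simple, and since $a=1$ and $a^2-b=q+3\neq q$ no row of the first table applies, so (1.1) is always a Jacobian. Likewise $\Delta=8q+4$ for (1.2) is a square only when $q=4$, and $\Delta=4q+1$ for (1.3) only when $q=2$; outside those values the surface is simple and a short check shows no row of the first table applies, while at $q=4$ (resp.\ $q=2$) one factors $P$ and inspects the second table to reach the same conclusion. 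For (1.4) one has $\Delta\equiv0$, so $J$ is always isogenous to the square of an elliptic curve of trace $1$, i.e.\ $s=t=1$; the second table then shows this fails to be a Jacobian precisely when $t^2-4q=1-4q\in\{-3,-4,-7\}$, which for a prime power $q$ occurs only for $q=2$, contributing the entry $(-2,5)$.

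For the sporadic list (1.5) I would run through the sixteen triples $(q,a,b)$ individually, in each case computing $\Delta$, factoring $P$ when $\Delta$ is a square, reading off $(s,t)$ and the $p$-rank, and checking the relevant row of Lemma~\ref{nonjac}. The triples that fail are $q=5$, $(8,26)$ and $q=4$, $(6,17)$ --- where $\Delta=0$, $s=t$, and $t^2-4q\in\{-4,-7\}$ --- together with the $q=2$ triples $(1,4)$ and $(3,6)$, where $P$ splits with $\abs{s-t}=1$, the triple $(2,5)$, where $s=t=-1$ and $t^2-4q=-7$, and the triple $(0,3)$, where $s=1$ and $t=-1$; all other triples yield Jacobians. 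Combining these with the $(-2,5)$ coming from (1.4) gives exactly the table in the statement.

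I do not expect a single hard step; the proof is a careful enumeration. Two points require attention. First, the elementary arguments that $\Delta$ is a non-square outside a short list of $q$ for each of (1.1)--(1.3) are the only non-mechanical computations. Second, one must pin down the $p$-rank correctly in the non-simple cases: for instance $X^4+X^3+4X^2+2X+4$ (the triple $q=2$, $(1,4)$) has $v_p(b)>0$, so it satisfies (2.2) rather than (2.1), which changes which rows of the second table of Lemma~\ref{nonjac} can apply. One must also respect the normalization $\abs s\ge\abs t$ (with the tie-break $s\ge0$ when $s=-t$) so as to match Lemma~\ref{nonjac} as stated.
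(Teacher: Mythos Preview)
Your proposal is correct and follows essentially the same approach as the paper: use Lemma~\ref{honda} to decide simplicity via $\Delta$, then feed each case into the appropriate table of Lemma~\ref{nonjac}. The paper organizes the simple case slightly more compactly (observing once that $a^2-b\ne q$ and that $a=0$ forces $b\in\{-1,3\}$, so no simple surface in the list is ever excluded), and its parity arguments for the non-squareness of $\Delta$ in (1.1)--(1.3) are phrased via $(z-c)(z+c)$ factorizations rather than your substitution $k=2j+1$, but these are cosmetic differences.
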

\vspace{.2cm}

\begin{proof}
Let $J$ be an abelian surface over $\F_q$ whose Weil polynomial
$P_J=X^4+aX^3+bX^2+aqX+q^2$ satisfies one of (1.1)--(1.5).
In each case, $a^2-b\ne q$, and if $a=0$ then $b\in\{-1,3\}$, so if $J$ is
simple then Lemma~\ref{nonjac} implies $J$ is isogenous to a Jacobian.

Henceforth assume $J$ is not simple, so
$P_J=(X^2-sX+q)(X^2-tX+q)$ where $s,t\in\Z$; we may assume that
$\abs{s}\ge\abs{t}$, and that $s\ge 0$ if $s=-t$.
Note that $a=-s-t$ and $b=2q+st$, so $(X-s)(X-t)=X^2+aX+b-2q$.  In particular,
$\Delta:=a^2-4(b-2q)$ is a square, say $\Delta=z^2$ with $z\ge 0$.

Suppose $P_J$ satisfies (1.1), so $\Delta=12q+9$.  Then $(z-3)(z+3)=12q$ is
even, so $z-3$ and $z+3$ are even and incongruent mod~$4$,
whence their product is divisible by $8$ so $q$ is even, contradiction.

Now suppose $P_J$ satisfies (1.2), so $\Delta=8q+4$.  Then $(z-2)(z+2)=8q$, so
at least one of $z-2$ and $z+2$ is divisible
by $4$; but these numbers differ by $4$, so they are both divisible by $4$, whence
their product is divisible by $16$ so $q$ is even.  Thus $8q$ is a power of $2$
which is the product of two positive integers that differ by $4$, so $q=4$.
In this case, $(q,a,b,s,t)=(4,0,-1,3,-3)$, which indeed satisfies (1.2).
Moreover, (2.1) holds, so Lemma~\ref{honda} implies $J$ has $p$-rank $2$.
Since $\abs{s-t}=6\notin\{0,1\}$ and $q\ne 2$, Lemma~\ref{nonjac} implies
$J$ is isogenous to a Jacobian.

Now suppose $P_J$ satisfies (1.3), so $\Delta = 4q+1$.  Then $(z-1)(z+1)=4q$,
so $z-1$ and $z+1$ are even and incongruent mod~$4$, whence their product is
divisible by $8$, so $q$ is even.  Thus $4q$ is a power of $2$ which is the
product of two positive integers that differ by $2$, so $q=2$.
In this case, $(q,a,b,s,t)=(2,-1,2,2,-1)$, which indeed satisfies (1.3).
Moreover, (2.2) holds, so Lemma~\ref{honda} implies $J$ has $p$-rank $1$.
Since $\abs{s-t}=3\ne 1$ and $q$ is a non-square, Lemma~\ref{nonjac} implies
$J$ is isogenous to a Jacobian.

Now suppose $P_J$ satisfies (1.4), so $\Delta=0$ and $a\notin\{0,\pm 2\sqrt{q}\}$,
and thus Lemma~\ref{nonjac} implies $J$ is non-simple.  Here
$(a,b,s,t)=(-2,2q+1,1,1)$, so Lemma~\ref{honda} implies $J$ has $p$-rank $2$.
Since $s=t=1$, Lemma~\ref{nonjac} implies $J$ is isogenous to
a Jacobian if and only if $1-4q\not\in\{-3,-4,-7\}$, or equivalently $q=2$.
This gives rise to the first entry in the last line of the table.

Finally, if $P_J$ satisfies (1.5) then the result follows from Lemma~\ref{nonjac}
and Lemma~\ref{honda} via a straightforward computation.
\end{proof}

\begin{remark}
The result announced in the abstract of \cite{a} is false,
since its hypotheses are satisfied by every two-dimensional
Jacobian over $\F_p$.  This is because the abstract of \cite{a}
does not mention the various hypotheses assumed in the
theorems of that paper.
\end{remark}

\vspace{.3cm}
We used the following Magma \cite{Magma} program in the proof of
Theorem~\ref{av}.

\begin{verbatim}
for q in [2..27] do if IsPrimePower(q) then
Q:=Floor(4*Sqrt(q)); M:=Floor((Sqrt(q)+1)^4/q^2);
for c in [2..M] do
for a in [-Q..Q] do b:=-1-a*(q+1)+(c-1)*q^2;
if b le (a^2/4)+2*q and 2*Abs(a)*Sqrt(q)-2*q le b then
p:=Factorization(q)[1,1]; m:=Factorization(q)[1,2];
Delta:=a^2-4*(b-2*q); delta:=(b+2*q)^2-4*q*a^2;
  if GCD(b,p) eq 1 then <q,a,b,c>;
  elif GCD(b,q) ge Sqrt(q) and GCD(a,p) eq 1 and
    (delta eq 0 or not IsSquare(pAdicRing(p)!delta)) then
    <q,a,b,c>;
  elif IsDivisibleBy(b,q) and GCD(a,q) ge Sqrt(q) and
    IsSquare(Delta) then
    if not IsSquare(q) then <q,a,b,c>;
    else sq:=p^((m div 2)); ap:=a div sq; bp:=b div q;
      if not ((bp eq 2 and IsDivisibleBy(p-1,4)) or
        (IsDivisibleBy(ap-bp,2) and IsDivisibleBy(p-1,3)))
        then <q,a,b,c>;
      end if;
    end if;
  elif (a eq 0 and b eq 0) then
    if ((IsSquare(q) and not IsDivisibleBy(p-1,8)) or
      (not IsSquare(q) and p ne 2)) then <q,a,b,c>;
    end if;
  elif (a eq 0 and b eq -q) then
    if ((IsSquare(q) and not IsDivisibleBy(p-1,12)) or
      (not IsSquare(q) and p ne 3)) then <q,a,b,c>;
    end if;
  elif a eq 0 and b in {q,-2*q} and not IsSquare(q) then
    <q,a,b,c>;
  elif a eq 0 and b eq 2*q and IsSquare(q) and
    IsDivisibleBy(p-1,4) then <q,a,b,c>;
  elif Abs(a) eq p^(m div 2) and b eq q and IsSquare(q) and
    not IsDivisibleBy(p-1,5) then <q,a,b,c>;
  elif Abs(a) eq p^((m+1) div 2) and b eq q and
    not IsSquare(q) and p eq 2 then <q,a,b,c>;
  elif Abs(a) eq 2*p^(m div 2) and b eq 3*q and IsSquare(q)
    and IsDivisibleBy(p-1,3) then <q,a,b,c>;
  elif Abs(a) eq p^((m+1) div 2) and b eq 3*q and
    not IsSquare(q) and p eq 5 then <q,a,b,c>;
  end if;
end if;
end for;
end for;
end if;
end for;
\end{verbatim}



\end{document}